\newtheorem{thm}{Theorem}
\newtheorem{cor}[thm]{Corollary}
\newtheorem{lem}[thm]{Lemma}
\theoremstyle{definition}
\theoremstyle{remark}
\newcommand{\dd}{\mathinner{\ldotp\ldotp}}
\title{Palindromes in starlike trees}%
\begin{document}

%\date{May 22, 2018}

\author{Amy Glen}

\address{Amy Glen \newline
\indent School of Engineering \& Information Technology \newline
\indent Murdoch University \newline
\indent  90 South Street \newline
\indent Murdoch, WA 6150 AUSTRALIA}%
\email{\href{mailto:A.Glen@murdoch.edu.au}{A.Glen@murdoch.edu.au}}

\author{Jamie Simpson}

\address{Jamie Simpson \newline
\indent Department of Mathematics and Statistics \newline
\indent Curtin University \newline
\indent Bentley, WA 6102 AUSTRALIA}%
\email{\href{mailto:Jamie.Simpson@curtin.edu.au}{Jamie.Simpson@curtin.edu.au}}

\author{W. F. Smyth}
\address{W. F. Smyth \newline
\indent Department of Computing and Software \newline
\indent McMaster University \newline
\indent Hamilton, Ontario L8S4K1 CANADA}%
\email{\href{mailto:smyth@mcmaster.ca}{smyth@mcmaster.ca}}

\begin{abstract} In this note, we obtain an upper bound on the maximum number of distinct non-empty palindromes in starlike trees. This bound implies, in particular, that there are at most $4n$ distinct non-empty palindromes in a starlike tree with three branches each of length~$n$ --- for such starlike trees labelled with a binary alphabet, we sharpen the upper bound to $4n-1$ and conjecture that the actual maximum is $4n-2$. It is intriguing that this simple conjecture seems difficult to prove, in contrast to the straightforward proof of the bound.
\end{abstract}

\maketitle

\section{Introduction}

We use the usual notation and terminology from graph theory and combinatorics on words.  

A \emph{word} of $n$ elements is represented by an array $x=x[1 \dd n]$, with $x[i]$ being
the $i$th element and $x[i\dd j]$ the \emph{factor} of elements from
position $i$ to position $j$.  If $i=1$ then the factor is a
\emph{prefix} and if $j=n$ it is a \emph{suffix}. The letters in $x$
come from some \emph{alphabet} $A$.  The \emph{length} of~$x$,
written $|x|$, is the number of letters $x$ contains.
 If $x=x[1\dd n]$ then the
\emph{reverse} of $x$, written $R(x)$, is $x[n]x[n-1]\cdots x[1].$ A 
word $x$ that satisfies $x = R(x)$ is called a \emph{palindrome}.

A \emph{starlike tree} $T$ is a tree consisting of a root vertex, called the \textit{central vertex}, from which there extends 3 or more \textit{branches} (i.e., simple paths) where each edge of a path directed from the central vertex to the terminal vertex (leaf) of a branch is labelled 
with a single letter of an alphabet $A$. 
Thus every path from the central vertex to a leaf in the tree,
as well as every simple path passing in reverse order from a leaf through the central vertex to the leaf of another branch,
constitutes a word. 
If a starlike tree $T$ consists of $k$ branches, each of length $n$, we say that $T$ is a $(k,n)-$\emph{starlike tree}.

\begin{figure}[htb!]
\includegraphics[scale=0.45]{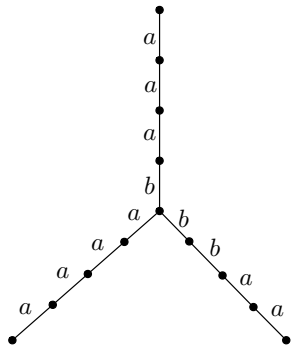}
\caption{\footnotesize A $(3,4)-$starlike tree with edges labelled using the binary alphabet $\{a,b\}$.}  \label{tree}
\end{figure}

The maximum number of distinct non-empty palindromes in a word of length $n$ is $n$ (see Theorem~\ref{DJP} below).   Moving from  words to graphs we can think of this  as the maximum number of distinct non-empty palindromes in an edge-labelled path $P_n$ of length $n$ where the labels are single letters.  This suggests extending the problem to other graphs.  In \cite{JS} it was shown that the maximum number of distinct non-empty palindromes in a cycle $C_n$ is less than $5n/3$.  For $n$ divisible by 3 the so-called \textit{Biggles Words} contain $5n/3-2$ distinct palindromes, so the bound is almost sharp. Brlek, Lafreni\'{e}re and Proven\c{c}al \cite{BLP} studied the palindromic complexity of trees and constructed families of trees with $n$ edges containing $\Theta(n^{1.5})$ distinct palindromes. They conjectured that there are no trees with asymptotically  larger palindromic complexity than that, and this was later proved by  Gawrychowski, Kociumaka,  Rytter and Wale\'{n} \cite{GKRW}.

In this note, we consider the maximum number of distinct non-empty palindromes
that can exist in a $(k,n)-$starlike tree.
We call this number $P(k,n)$ and prove that $P(k,n) \le (1+ \binom{k}{2})n$.
For $k=3$ this gives $P(3,n) \le 4n$, but
for trees labelled with a binary alphabet we sharpen this result to $P(3,n) \le 4n-1$. 
On the basis of computational evidence,
we conjecture that for $k = 3$ the best possible bound is $4n-2$.
Trees attaining this bound are easily found, but mysteriously, it seems very difficult to prove.

\section{Results}

The following well-known result is due to Droubay, Justin, and
Pirillo~\cite{DJP}. We give a proof since its  ideas will be
used later.

\begin{lem}\label{DJP} The number of distinct non-empty palindromes in a word
of length $n$ is at most $n$.
\end{lem}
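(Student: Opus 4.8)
The plan is to build the word one letter at a time and argue by induction that each new letter can introduce at most one previously unseen palindrome. Writing $w_i = x[1\dd i]$ for the length-$i$ prefix, I would let $p(i)$ denote the number of distinct non-empty palindromic factors of $w_i$, observe that $p(0)=0$, and prove that $p(i) \le p(i-1)+1$ for each $i$. Summing these inequalities over $i = 1, \dots, n$ then yields $p(n) \le n$, which is the assertion.

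The crux is the claim that at each step the \emph{only} candidate for a new palindrome is the longest palindromic suffix of $w_i$. This rests on two observations. First, any palindrome that occurs in $w_i$ but not in $w_{i-1}$ must be a suffix of $w_i$: every occurrence of such a factor must end at position $i$, for otherwise it would already be a factor of $w_{i-1}$. Second, among the palindromic suffixes of $w_i$ — which are totally ordered by length, being all suffixes of $w_i$ — at most the longest can be new. To see this, let $P$ be the longest palindromic suffix of $w_i$ and let $Q$ be a strictly shorter one. Then $Q$ is a suffix of $P$, and because $P$ is a palindrome, $R(Q)$ is a prefix of $R(P)=P$; since $Q$ too is a palindrome, $R(Q)=Q$, so $Q$ occurs as a \emph{prefix} of $P$. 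That prefix occurrence begins where $P$ begins and, having length $|Q|<|P|$, ends strictly before position $i$, hence $Q$ already occurs in $w_{i-1}$ and is not new.

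Combining the two observations, the passage from $w_{i-1}$ to $w_i$ can create at most the single new palindrome $P$, giving $p(i)\le p(i-1)+1$ and completing the induction.

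The step I expect to require the most care is the symmetry argument in the second observation — the passage from ``$Q$ is a suffix of the palindrome $P$'' to ``$Q$ is also a prefix of $P$.'' This is precisely where the palindromic structure is used, so it is worth recording explicitly that a word equal to its reverse has each of its suffixes, once reversed, appearing as a prefix; the extra input here is that $Q$ is itself a palindrome, so it appears directly as a prefix of $P$ without reversal. Everything else is routine bookkeeping about occurrences and lengths.
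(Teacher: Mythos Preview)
Your proof is correct and is essentially the same argument as the paper's: both hinge on the observation that if two palindromes end at the same position, the shorter is a suffix---hence also a prefix---of the longer and therefore already occurred earlier, so at most one palindrome can make its first appearance ending at each position. The paper states this in two sentences without the explicit inductive bookkeeping, but the content is identical.
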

\begin{proof} If two palindromes end at the same place then the
shorter is a suffix of the longer.  It is therefore also a prefix of
the longer and so has occurred earlier in the word. Thus at each
position there is the end of at most one palindrome making its first
appearance in the word. The lemma follows.
\end{proof}

Of course it is also true that each position in a word
can be the starting point of the last occurrence of at most one palindrome.
Note that a position $i$ that marks the end of the first appearance of a palindrome
in $x[1..n]$ also marks the start of the last occurrence $j = n-i+1$ of the same palindrome
in $R(x)$.

\begin{thm} \label{T:main} An edge-labelled starlike tree with branches 
$b_1, b_2,\dots, b_k$, where $|b_1| \ge |b_2| \ge \cdots \ge |b_k|$,
contains at most $$|b_1|+\sum_{i=2}^k (i-1)|b_i|$$ distinct non-empty palindromes.
\end{thm}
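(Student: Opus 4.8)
The plan is to prove the bound by assembling the tree one branch at a time, in order of decreasing length, and using Lemma~\ref{DJP} to control how many palindromes each newly added branch can introduce. First I would fix notation: root the tree at the central vertex $c$, and for each branch $b_i$ let $w_i$ be the word of length $|b_i|$ obtained by reading from $c$ outwards along $b_i$. Every simple path in the tree is either a factor of a single $w_i$ (a path lying inside one branch) or a factor of a leaf-to-leaf word $u_{ij} := R(w_i)\,w_j$ read from the leaf of $b_i$ through $c$ into $b_j$ (for $i \neq j$); since $w_j$ is a suffix of $u_{ij}$, every palindrome of the tree occurs as a palindromic factor of some $u_{ij}$. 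Let $T_m$ denote the starlike tree formed by the $m$ longest branches $b_1,\dots,b_m$, and let $P_m$ be its number of distinct non-empty palindromes.

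The reduction is then as follows. As $T_1$ is a single path of length $|b_1|$, Lemma~\ref{DJP} gives $P_1 \le |b_1|$. I would next show that passing from $T_{m-1}$ to $T_m$ creates at most $(m-1)|b_m|$ \emph{new} palindromes, meaning palindromes present in $T_m$ but absent from $T_{m-1}$. Summing these increments telescopes to $P_k \le |b_1| + \sum_{m=2}^k (m-1)|b_m|$, which is exactly the claimed bound. As a sanity check, $T_2$ is a path of length $|b_1|+|b_2|$ and the predicted increment $|b_2|$ matches $P_2 \le |b_1|+|b_2|$ coming directly from Lemma~\ref{DJP}.

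For the incremental bound, observe that every new palindrome must use at least one edge of the added branch $b_m$, so it has an occurrence reaching a vertex of $b_m$ at depth $d \in \{1,\dots,|b_m|\}$. For each existing branch $i<m$ consider the word $u_{im}=R(w_i)\,w_m$, indexed so that the letters of $b_m$ occupy positive positions $1,\dots,|b_m|$ and those of $b_i$ occupy negative positions. The key point, exactly the idea in the proof of Lemma~\ref{DJP}, is that a palindrome new to $T_m$ cannot have its first occurrence in $u_{im}$ end at a negative position, since that would exhibit it as a palindrome lying inside $b_i$ and hence already present in $T_{m-1}$; thus its first occurrence in $u_{im}$ ends at a positive position, i.e.\ at a depth of $b_m$. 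I would assign each new palindrome a canonical partner branch $i$ (taking $i=1$ for palindromes lying inside $b_m$, and the actual crossing partner otherwise) and then the positive end-position of its first occurrence in $u_{im}$; the suffix-is-prefix argument of Lemma~\ref{DJP} then forces at most one new palindrome per (partner, depth) pair. With $m-1$ partners and $|b_m|$ depths this yields at most $(m-1)|b_m|$ new palindromes.

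The \textbf{main obstacle} is making this assignment genuinely injective into the (partner, depth) slots. I must verify that a palindrome lying inside $b_m$ and a palindrome crossing from $b_1$ into $b_m$ cannot both claim the same depth in $u_{1m}$ unless one is forced to have appeared earlier (and so is not new), and that crossing palindromes for distinct partners $i$ are genuinely distinguished by their branch-$i$ content, so that no palindrome is charged to two partners. Carrying out the reversal bookkeeping of the remark following Lemma~\ref{DJP} — matching the end of a first appearance in $u_{im}$ with a well-defined position of $b_m$, and correctly handling palindromes centred at or adjacent to the central vertex $c$ — is where the real care is needed; the telescoping sum itself is then routine.
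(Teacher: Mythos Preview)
Your approach is correct and essentially the same as the paper's: both arguments charge each distinct palindrome to a pair $(i,j)$ with $i<j$ together with the position in $b_j$ at which its first occurrence in $R(b_i)b_j$ ends, Lemma~\ref{DJP} giving at most one palindrome per slot; the paper simply enumerates the pairs grouped by the first index while you group them by the second (adding one branch at a time), but the bookkeeping is identical. The ``main obstacle'' you flag is not a genuine difficulty --- once the partner $i$ is fixed, Lemma~\ref{DJP} applied to the single word $u_{im}$ already forces distinct end-positions for distinct palindromes, so injectivity of your charging map is immediate and the paper accordingly spends no effort on it.
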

\begin{proof}
We say that a palindrome within a branch $b_i$ is \emph{local},
while one that overlaps a path $R(b_i)b_j$, $j > i$, through the central vertex is \emph{overlapping}.

The path $R(b_1)b_2$ contains $|b_1|+|b_2|$ edge labels and therefore,
by Lemma~\ref{DJP}, contains at most $|b_1|+|b_2|$ distinct palindromes.
Now consider the path $R(b_1)b_3$.
This contains at most $|b_1|+|b_3|$ distinct palindromes,
with at most $|b_3|$ of their first appearances ending in $b_3$.
Palindromes local to $b_1$  would have been counted in the path $R(b_1)b_2$.
Thus, in addition to these palindromes, there are at most $|b_3|$ other
palindromes in $R(b_2)b_3$, whether local or overlapping.
Similarly, there are at most $|b_i|$ new palindromes
in each path $R(b_1)b_i$ for $i = 4,5,\ldots,k$.
Thus the total number of new palindromes in paths $R(b_1)b_i$, $i = 2,3,\ldots,k$,
is at most $\sum_{i=1}^k |b_i|$.

Now consider the paths $R(b_2)b_i$ for $3 \le i \le k$.
The set of palindromes in $R(b_2)$ is of course exactly the set
of palindromes in $b_2$.
These local palindromes fall into two types, as follows. 
\begin{itemize}
\item[Type 1:]
Those palindromes counted in $b_{12} = R(b_1)b_2$ because their first occurrences were in $b_{12}$,
thus not in $R(b_1)$. These will of course also occur in $R(b_2)b_i$,
but will not be counted a second time.
\item[Type 2:]
Those palindromes \emph{not} counted in $b_{12}$.
These palindromes must therefore not have their first occurrences in $R(b_1)b_2$,
and so must have occurred (and been counted) previously. 
These palindromes will therefore not be counted a second time.
\end{itemize}
Thus in $R(b_2)b_i$ there will be no new palindromes local to $b_2$,
only (local or overlapping) palindromes ending in $b_i$, of which there will be at most $\sum_{i=3}^k |b_i|$ altogether, by Lemma~\ref{DJP}.

Considering now all the paths $R(b_i)b_j$, $i = 1,2,\ldots,k-1,\ j = i+1,i+2,\ldots,k$,
we see that the maximum number of palindromes in all paths of the starlike tree is
\begin{eqnarray*}
&& \sum_{i=1}^k |b_i| + \sum_{i=3}^k |b_i| + \dots +\sum_{i=k-1}^k |b_i|+|b_k|\\
&=& |b_1| + \sum_{i=2}^k (i-1)|b_i|,
\end{eqnarray*}
as required.
\end{proof}

\begin{cor} \label{C1}
$$P(k,n) \le \left(1+ \binom{k}{2}\right)n.$$
\end{cor}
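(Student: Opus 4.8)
The plan is to read this off directly from Theorem~\ref{T:main} by specialising to the equal-length case. A $(k,n)$-starlike tree is by definition one whose $k$ branches all have length exactly $n$, so in the notation of the theorem we have $|b_1| = |b_2| = \cdots = |b_k| = n$. In particular the required ordering $|b_1| \ge |b_2| \ge \cdots \ge |b_k|$ holds (with equality throughout), so Theorem~\ref{T:main} applies to any such tree without any relabelling of branches. Substituting $|b_i| = n$ into the theorem's bound gives
$$P(k,n) \le n + \sum_{i=2}^k (i-1)\,n.$$

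The only remaining step is to evaluate the coefficient of $n$. Factoring $n$ out of the sum, I would compute
$$1 + \sum_{i=2}^k (i-1) = 1 + \sum_{j=1}^{k-1} j = 1 + \frac{(k-1)k}{2} = 1 + \binom{k}{2},$$
so that $P(k,n) \le \left(1 + \binom{k}{2}\right)n$, exactly as claimed.

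There is no genuine obstacle here: the corollary is a pure specialisation of Theorem~\ref{T:main}, and essentially all of the work is the reindexing of the arithmetic series $\sum_{i=2}^k (i-1)$ so as to recognise it as the triangular number $\binom{k}{2}$. The one point worth checking is that the hypothesis on branch lengths is satisfied, but since all branches share the common length $n$ the weak inequalities $|b_1| \ge \cdots \ge |b_k|$ are automatic. For $k=3$ this recovers the headline bound $P(3,n) \le 4n$ quoted in the introduction, which is the case the remainder of the note refines.
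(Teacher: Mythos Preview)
Your proposal is correct and follows exactly the same approach as the paper: substitute $|b_i| = n$ for every $i$ into the bound of Theorem~\ref{T:main} and simplify. You have simply spelled out the arithmetic (and the trivial verification of the ordering hypothesis) that the paper leaves implicit.
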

\begin{proof} Substitute $n$ for each $|b_i|$ in the theorem.
\end{proof}

%This is all very well, but 
Table~\ref{table} (below) shows values of $P(k,n)$ for low values of $k$ and $n$ when we are restricted to a \textit{binary alphabet}. We see that the upper bounds are far from sharp.  % \vspace*{-1.5em}
{
\begin{table}[htb!]
\[\begin{array}{lccc}
n & k=3 & k=4 & k=5\\
   1  & 3,4 & 4,7 & 4,11\\
   2  & 6,8 & 8,14 & 9,22\\
   3  & 10,12 & 14,21 & 16,33\\
   4  & 14,16 & 20,28 & 24,44\\
   5  & 18,20 & 26,35 & 32,55\\
 \end{array} \]
 
\caption{\footnotesize Maximum number of palindromes in various starlike trees of fixed branch length.  The first value in each cell is $P(k,n)$ and the second is the upper bound given by  Corollary~\ref{C1}.} \label{table}
\end{table}
}

\newpage

Using larger alphabets does not seem to increase the maxima except in the case of a starlike tree with five branches of length $1$.  With a binary alphabet we get at most four palindromes with branches labelled $a, a, a,b,b$; with a ternary alphabet we get five palindromes using $a,a,b,b,c$.

One might expect there to be an easy induction proof, but there is not. In the case of starlike trees with $3$ branches of length $n$ labelled by a binary alphabet, Corollary~\ref{C1} gives an upper bound of $4n$. One might assume that by adding an extra letter to each branch you could only get at most $4$ new palindromes, but this is not implied by our proof. If the three branches are labelled $A$, $B$ and $C$, adding an extra letter to the $A$ branch can give an extra palindrome in the $AB$ branch and in the $AC$ branch, so $2$ more palindromes starting in the $A$ branch. Also $2$ more starting in each of the branches $B$ and $C$, so up to $6$ new palindromes (not 4) altogether.

We conjecture that, for all $n \geq 2$, $P(3,n) = 4n-2$. This bound can be attained using branches labelled $a^n$, $ba^{n-1}$ and $bba^{n-2}$ (see Figure~\ref{tree}), but it seems very difficult to prove. One can make similar conjectures for larger $k$  but in these cases there are many examples attaining the maxima, none of which look suitably canonical.

The following is a slight improvement on the bound for $P(3,n)$ when the alphabet is binary.

\begin{thm} The maximum number of distinct non-empty palindromes arising when a binary alphabet is used to edge-label a starlike tree with three length $n$ branches is at most $4n-1$.
\end{thm}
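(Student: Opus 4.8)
The plan is to start from the bound $P(3,n)\le 4n$ given by Corollary~\ref{C1} and exhibit a single ``wasted'' count, that is, a structural reason why at least one position in the analysis of Theorem~\ref{T:main} cannot contribute a genuinely new palindrome when the alphabet is binary. Recall how the $4n$ arises: the three paths through the central vertex are $R(b_1)b_2$, $R(b_1)b_3$, and $R(b_2)b_3$, contributing at most $|b_1|+|b_2|=2n$, then $|b_3|=n$ new palindromes ending in $b_3$, then $|b_3|=n$ new palindromes ending in $b_3$, for a total of $4n$. The key observation to exploit is that with only two letters available, the very first edges of the branches are forced into coincidences: among the three initial edge-labels (leaving the central vertex), by the pigeonhole principle at least two are equal. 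This collision is exactly the kind of redundancy that the counting argument of Theorem~\ref{T:main} does not yet charge for.

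First I would set up the count precisely, identifying each palindrome with the position marking the \emph{end of its first appearance} along the relevant path, as in Lemma~\ref{DJP}. I would label the three branches $A=b_1$, $B=b_2$, $C=b_3$ and write $a_1,b_1',c_1$ for their first edge-labels (those incident to the central vertex). Since the alphabet is binary, two of these three first letters agree; without loss of generality say the first letters of $B$ and $C$ coincide. Then the single-letter palindrome formed by that common first letter is counted as ``new'' in the analysis of path $R(b_1)b_3$ (or $R(b_2)b_3$) even though it already appeared as the first edge of $B$ and was therefore counted earlier. More carefully, I would track the length-$1$ and length-$2$ palindromes straddling the central vertex: the word read across the centre on path $R(b_i)b_j$ has as its central factor the two-letter block (last letter of $b_i$)(first letter of $b_j$), and the forced letter-collisions among first edges make one of the short central palindromes a repeat rather than a first occurrence.

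The cleanest way to harvest a saving of exactly one is to argue that among all palindromes ending in $b_3$ that are counted across the two paths $R(b_1)b_3$ and $R(b_2)b_3$, at least one first-appearance position is double-counted or blocked: concretely, the first edge-label of $b_3$ gives a single-letter palindrome whose first occurrence is genuinely earlier (on $b_1$ or $b_2$, since two of the three first letters coincide), so one of the $n$ palindromes allotted to ``new palindromes ending in $b_3$'' on the relevant path does not materialize. This removes one from the total, yielding $4n-1$. I would phrase this as: the position at distance $1$ from the central vertex along $b_3$ cannot host the first appearance of a new palindrome on \emph{both} paths through the centre, and in the binary case it hosts none, because its length-$1$ palindrome already occurred.

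The main obstacle I anticipate is making the ``forced collision gives a genuine saving'' step airtight rather than merely plausible: one must rule out the possibility that the palindrome one hopes to discard was actually the \emph{only} first-appearance at its position on its path, so that discarding it does not simply shift the saving elsewhere and get cancelled. In other words, the delicate point is bookkeeping across the \emph{three} paths simultaneously, ensuring that the single unit removed from the Theorem~\ref{T:main} count is not silently restored by some overlapping palindrome that the crude per-path bound of Lemma~\ref{DJP} had left room for. I would handle this by fixing, up to symmetry, which two branches share a first letter and then exhibiting the specific redundant length-$1$ (and if necessary length-$2$) palindrome explicitly, verifying directly that its removal is not compensated. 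This is also exactly why the sharper bound $4n-2$ resists proof: a second saving would require a second forced coincidence, but binary-ness guarantees only one pigeonhole collision among three first letters, so the argument naturally stops at $4n-1$.
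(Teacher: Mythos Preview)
Your overall plan coincides with the paper's: apply the pigeonhole principle to the three edge-labels adjacent to the central vertex, find two branches that agree there, and argue that this coincidence forces one unit of saving in the $4n$ count of Theorem~\ref{T:main}. So the strategy is right.

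The execution, however, has a real gap. You repeatedly reason about the \emph{single-letter} palindrome at $b_3[1]$ being ``already counted'' and propose to ``exhibit the specific redundant length-$1$ (and if necessary length-$2$) palindrome explicitly''. But the bound from Lemma~\ref{DJP} is a bound on \emph{positions}: each position contributes at most one new palindrome, namely the longest palindromic suffix not seen earlier in that word, and this need not have length~$1$. Knowing that the letter at $b_3[1]$ has already appeared does not prevent that position from hosting a genuinely new, longer palindrome (in $abba$ the letter $a$ at position~$4$ has appeared before, yet the new palindrome $abba$ ends there). So the sentence ``in the binary case it hosts none, because its length-$1$ palindrome already occurred'' is exactly where the argument fails, and your anticipated ``main obstacle'' is not a bookkeeping nuisance but the missing idea.

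The paper closes this in one line, and it is worth seeing why no case analysis is needed. Label the branches so that $y$ and $z$ are the two with the same centre-adjacent letter~$\alpha$, and compare the two words $R(x)y$ and $R(x)z$: they share the common prefix $R(x)\alpha$ of length $n{+}1$. Hence \emph{every} palindrome of $R(x)z$ ending at position $n{+}1$ --- regardless of its length --- is literally the same factor of $R(x)y$ ending at position $n{+}1$, so it was already counted there. The entire position is redundant, giving $2n + n + (n{-}1)=4n-1$ with no tracking of short palindromes and no ``compensation'' issue. Note that this requires pairing $z$ with the odd branch $x$; your proposal leaves the choice of path ambiguous (``$R(b_1)b_3$ or $R(b_2)b_3$''), and the alternative $R(y)z$ does not yield the same clean conclusion.
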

\begin{proof}
Label the three branches $x$, $y$ and $z$.
Since our alphabet is binary, at least two of these have the same $n$th letter,
say $\alpha$.
Without loss of generality, suppose $y[n] = z[n] = \alpha$.
There are at most $2n$ palindromes in $R(x)y$ and at most $n$ more distinct palindromes in $R(y)z$.
We claim however that there are at most $n-1$ new ones in $R(x)z$. 
For suppose that a palindrome in $R(x)z$ ends at $z[n]$.
Then it must also occur in $R(x)y$ ending at $y[n]$, and so it has already been counted.
Thus there are at most $4n-1$ distinct palindromes in the starlike tree.
\end{proof}
Similar results hold whenever paths in the tree share a common suffix.

\bibliographystyle{amsplain}

\end{document}